\documentclass[12pt,twoside, final]{amsart}

\usepackage{amsmath,amsthm,amscd,amsfonts,amssymb,enumerate}
\usepackage{graphicx}
\usepackage{color}
\usepackage[colorlinks]{hyperref}
\usepackage{amsfonts,amssymb,amscd,amsmath,enumerate,url,verbatim}
 \usepackage[dvips]{epsfig}
 \usepackage[none]{hyphenat}
\usepackage{amsmath,amssymb,amsfonts,enumerate,amsthm}
 \usepackage{amsgen, amstext,amsbsy,amsopn, amsthm, amsfonts,amssymb,amscd,amsmath,euscript,enumerate,url,verbatim,calc,xypic}
 \usepackage{latexsym}
 \usepackage{graphics}
 \usepackage{color}
\numberwithin{equation}{section}
\newtheorem{thm}{Theorem}[section]
\newtheorem{lem}[thm]{Lemma}
\newtheorem{cor}[thm]{Corollary}
\newtheorem{con}[thm]{Conjecture}

\newtheorem{qu}[thm]{Question}

\newtheorem{nota rem}[thm]{Notation and Remark}

\newtheorem{defen rem}[thm]{Definition and Remark}

\def\hei{\operatorname{ht}}

\def\pd{\operatorname{pd} }
 \def\Ext{\operatorname{Ext}}
\def\Hom{\operatorname{Hom}}
\def\Spec{\operatorname{Spec}}

\def\grade{\operatorname{grade}}
 
\def\cd{\operatorname{cd}}
\def\id{\operatorname{id}}

\def\Supp{\operatorname{Supp}}
\def\var{\operatorname{Var}}

 \def\dim{\operatorname{dim}}
\newcommand{\p}{\mathfrak{p}}

\newcommand{\fa}{\mathfrak{a}}

\newcommand{\m}{\mathfrak{m}}

\newcommand{\Z}{\mathbb{Z}}

\newcommand{\NN}{\mathbb{N}}

\begin{document}
\bibliographystyle{amsplain}

\author{M. Jahangiri}
\address{ Department of Mathematics, Faculty of Mathematical Sciences and Computer, Kharazmi University, Tehran,
Iran.}
\email{ jahangiri@khu.ac.ir\\
jahangiri.maryam@gmail.com}

 \author{R. Ahangari Maleki}
\email{ahangari.rasoul@gmail.com}
 
\subjclass[2020]{ 13D45, 14B15, 13H05,  13C11, 13E05.  }
\keywords{ Local cohomology modules, Bass numbers, Spectral sequences, Injective dimension, Regular rings. }

\title[  Bass numbers of local cohomology modules ...]
{  Bass numbers of   local cohomology modules AT THE FIRST AND LAST NON-VANISHING LEVELs}

\begin{abstract}
Let $R $ be a commutative Noetherian ring, $\fa$ be an ideal of $R$   and $M$  be a finitely generated    $R$-module.  In this paper, we study the Bass numbers  
$\{\mu^i(\p, H^j_{\fa}(M))\} $   of local cohomology modules with respect to an ideal $\p\in \Spec(R)$ in each of the following cases:
\begin{enumerate}
    \item $i\in \{ 0, 1, 2\}$ and $j= \grade_{\fa}(M),$ 
  \item  $R$ is regular and $i\in\{ \hei(\p),  \hei(\p)- 1\}$ and $j= \cd_{\fa}(M)$, the cohomological dimension of $M$ with respect to $\fa$.
  \end{enumerate}
  
\end{abstract}
\maketitle

\section{introduction}

Throughout the paper, let $R$ be a non-trivial commutative Noetherian ring with identity, $\fa$   an ideal of $R$ and $M$ be a non-zero finitely generated $R$-module.     $\Z$, $\NN_0$ and $\NN $, respectively, stand for the set of   integers, non-negative integers and positive integers.

For any $i\in \NN_0$, $H^i_{\fa}(-)$ denotes the $i$-th local cohomology functor with respect to $\fa$ which is the $i$-th right derived functor of the $\fa$-torsion functor $\Gamma_{\fa}(-)$. Recall that for any $R$-module $X$,
 \[\Gamma_{\fa}(X):= \cup_{t\in \NN} (0:_X \fa^t)= \{x\in \NN| \exists t\in \NN, \fa^t x=0\}.\]
 For the basic properties of local cohomology modules, we refer the reader to \cite{bsh}.  
Vanishing and finiteness properties of local cohomology modules  are among the central problems in this theory and have attracted considerable attention, see for example \cite{al, dnt, hu1, l}  and \cite{o}.
In \cite{hu1}, Huneke conjectured that:
 \begin{con}\label{conj}
     If $(R, \m)$ is a regular local ring, then for any $\p\in \Spec(R)$ the Bass numbers \[\mu^i(\p, H^j_{\fa}(R)):= \dim_{\kappa(\p)}(\Ext^i_{R_{\p}}(\kappa(\p), H^j_{\fa}(R)_{\p}))\] of local cohomology modules are finite for all $i, j \in \NN_0.$
 \end{con}
 Where,  $\kappa(\p):= \tfrac{R_{\p}}{\p R_{\p}}$ is the residue field of $R_{\p}$. Obviously, the Bass numbers $\mu^i(\p, H^j_{\fa}(R))$ are finite if $H^j_{\fa}(R)$ is finitely generated. But, local cohomology modules are hardly ever finitely generated!
In the case where $(R, \m)$ is a regular local ring containing a field the above conjecture has an affirmative answer. Actually,   Huneke and Sharp (\cite{hu}), in the case of positive characteristic, and Lyubeznik (\cite{lyu2})  showed that in this case $\mu^i(\m, H^j_{\fa}(R))< \infty$ for all   $i, j \in \NN_0.$

Note that if $R$ is not regular, the Bass numbers  $\mu^i(\p, H^j_{\fa}(R))$ may be infinite, as shown by an example of Hartshorne (\cite{ha}). He proved that  $\mu^0((x, y, z, w), H^j_{(x, y)}(\tfrac{K[x, y, z, w]}{(xz- yw)}))= \infty$, where $K$ is a field. 

Also, Bahmanpour et  al. in \cite{bns}   showed that if $(R, \m)$ is a regular local ring then $\mu^0(\p, H^2_{\fa}(R))$ and $\mu^1(\p, H^2_{\fa}(R))$ are finite for all $\p\in \Spec(R)$ and that,  when $\dim(R)\leq 3$ then  the Bass numbers of $ H^j_{\fa}(M)$ are finite, for any ideal $\fa$ of $R$ and   $i\in \NN_0$, see \cite{ba}. 

One can see \cite{al, he}  and \cite{yan} for research on the Bass numbers of local cohomology modules under different conditions.

 In this paper, we provide partial answers to the Conjecture \ref{conj} and we consider the Bass numbers of $H^j_{\fa}(M)$
  where $j$ is the first and last non-vanishing levels of local cohomology modules of $M$ with respect to $\fa$. More precisely, we prove the following statements.
Here, $ \cd_{\fa}(M)$ denotes the cohomological dimension of $M$ with respect to $\fa$, the
last non-vanishing level of local cohomology modules of $M $ with respect to $\fa$, see \ref{defcd} for definition.
  \begin{thm}
  Assume that $R$ is regular, $\p\in \Supp_R(H^c_{\fa}(M))$ and set $d:= \hei(\p)$ and $c:= \cd_{\fa}(M)$. Then the following statements hold.
    \begin{enumerate}
        \item $\mu^d(\p, H^c_{\fa}(M))= \mu^{d+ c}(\p, M)< \infty$.\label{d,c}
        \item $\mu^{d- 1}(\p, H^c_{\fa}(M))\leq  \mu^{d+ c- 1}(\p, M)< \infty$. The equality holds if $\mu^d(\p, H^{c- 1}_{\fa}(M))= 0$.\label{d-1,c}
        \item If $\mu^d(\p, H^{c- 1}_{\fa}(M))= 0$, then $\mu^{d- 2}(\p, H^c_{\fa}(M))\leq \mu^{d+ c- 2}(\p, M)< \infty$.\label{d,c-1}
         \end{enumerate}
 \end{thm}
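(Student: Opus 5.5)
Localizing at $\p$ reduces everything to the local case. Since $\mu^i(\p,X)=\mu^i(\p R_\p,X_\p)$, $H^j_{\fa}(M)_\p\cong H^j_{\fa R_\p}(M_\p)$, and $\cd_{\fa R_\p}(M_\p)\le c$, we may replace $R$ by $R_\p$. Then $(R,\m)$ is regular local of dimension $d$ with $\m=\p R_\p$. Because $\p\in\Supp_R(H^c_{\fa}(M))$, the localized module still has cohomological dimension exactly $c$. So it suffices to treat the local case and compute $\mu^i(\m,-)=\dim_k\Ext^i_R(k,-)$ with $k=R/\m$.

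The main tool is the Grothendieck spectral sequence for the composite functor $\Hom_R(k,-)\circ\Gamma_{\fa}=\Hom_R(k,-)$, valid because $\m\supseteq\fa$ and $\Gamma_{\fa}$ sends injectives to injectives. It reads
\[E_2^{i,j}=\Ext^i_R(k,H^j_{\fa}(M))\Longrightarrow \Ext^{i+j}_R(k,M).\]
Since $R$ is regular of dimension $d$, we have $\pd k=d$, so $E_2^{i,j}=0$ for $i>d$. Also $E_2^{i,j}=0$ for $j>c$. Hence the nonzero terms lie in the rectangle $0\le i\le d$, $0\le j\le c$, and the corner $(d,c)$ will do the work.

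Differentials are $d_r:E_r^{i,j}\to E_r^{i+r,j-r+1}$.

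\textbf{Part (1).} Every differential into $E^{d,c}$ starts at $(d-r,c+r-1)$, which is zero for $r\ge2$. Every differential out of it lands in column $d+r>d$, which is also zero. So $E_\infty^{d,c}=E_2^{d,c}$. It is the only nonzero term on the diagonal $i+j=d+c$, so $\Ext^d(k,H^c_{\fa}(M))\cong\Ext^{d+c}(k,M)$. The right-hand side is finite since $M$ is finitely generated.

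\textbf{Part (2).} Consider $E^{d-1,c}$. Incoming differentials come from rows above $c$, so they vanish. Outgoing differentials go to $(d-1+r,c-r+1)$, which is nonzero only for $r\le1$; for $r\ge2$ the column exceeds $d$. So $E_\infty^{d-1,c}=E_2^{d-1,c}$. On the diagonal $d+c-1$ the only other possibly nonzero term is $E^{d,c-1}$. Thus $E_2^{d-1,c}$ is a subquotient of $\Ext^{d+c-1}(k,M)$, which gives the inequality. If $\mu^d(\m,H^{c-1}_{\fa}(M))=0$, then $E_2^{d,c-1}=0$, the diagonal has only one term, and equality follows.

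\textbf{Part (3).} This is where I expect the main bookkeeping obstacle. Look at $E^{d-2,c}$. There are no incoming differentials. The outgoing differentials are
\[d_2:E_2^{d-2,c}\to E_2^{d,c-1},\]
which is zero by hypothesis, and maps landing in columns $\ge d+1$, which are zero. So $E_2^{d-2,c}=E_\infty^{d-2,c}$, a subquotient of $\Ext^{d+c-2}(k,M)$. This gives the inequality in (3).

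The only delicate point is checking that no other $d_r$ with $r\ge3$ leaves $E^{d-2,c}$ into a nonzero spot. Such a map would land in column $d-2+r\ge d+1$, which vanishes.

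Finiteness in all three parts follows from $M$ being finitely generated.
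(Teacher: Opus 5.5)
Your proposal is correct and is essentially the paper's proof: you localize at $\p$ (where $\p\in\Supp_R(H^c_{\fa}(M))$ keeps the cohomological dimension equal to $c$ and forces $\fa\subseteq\p$), then read off the corner terms $E_2^{d,c}$, $E_2^{d-1,c}$, $E_2^{d-2,c}$ of the spectral sequence $\Ext^i_R(k,H^j_{\fa}(M))\Rightarrow\Ext^{i+j}_R(k,M)$, which vanishes for $i>d$ or $j>c$. Your explicit tracking of the differentials $d_r$ spells out what the paper leaves implicit in its filtration argument.
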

 The above theorem yields the following result concerning the injective dimension of local cohomology modules.

 \begin{thm}
     Let $R$ be a regular ring and  $c:= \cd_{\fa}(M)$. Then the following statements hold.
 \begin{enumerate}
       \item If $c> 0$, then $\id_{R}(H^c_{\fa}(M))< \dim(R)$ and $H^c_{\fa}(M)$ is not finitely generated.
        \item If $c> 1$, then $\id_{R}(H^c_{\fa}(M))< \dim(R)- 1$.
        \item If $c> 2$ and   $\mu^{\hei(\p)}(\p, H^{c- 1}_{\fa}(M))= 0$ for any $\p\in \Spec(R)$, then $\id_{R}(H^c_{\fa}(M))< \dim(R)- 2$.
 \end{enumerate}
     
 \end{thm}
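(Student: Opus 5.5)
The plan is to deduce this theorem from the preceding one, using the standard description of injective dimension through Bass numbers. Recall that for any $R$-module $X$ we have $\id_R(X)=\sup\{i : \mu^i(\p,X)\neq 0 \text{ for some } \p\in\Spec(R)\}$. Moreover, for $\p\notin\Supp_R(X)$ all Bass numbers $\mu^i(\p,X)$ vanish. So it suffices to fix $\p\in\Supp_R(H^c_{\fa}(M))$, put $d=\hei(\p)$, and bound the indices $i$ with $\mu^i(\p,H^c_{\fa}(M))\neq 0$.

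First I control the top indices. Since $R_\p$ is regular local of dimension $d$, it has injective dimension $d$. Localization commutes with local cohomology, so $\mu^i(\p,H^c_{\fa}(M))$ is computed from the $R_\p$-module $H^c_{\fa R_\p}(M_\p)$. Because $R_\p$ has finite global dimension, one expects $\mu^i(\p,H^c_{\fa}(M))=0$ for $i>d$. Indeed, $\Ext^i_{R_\p}(\kappa(\p),-)=0$ for $i>\pd_{R_\p}\kappa(\p)=d$. Hence $\id_R(H^c_{\fa}(M))\le \sup_\p \hei(\p)\le \dim R$.

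Next I kill the top Bass numbers using the preceding theorem. Since $\mu^{j}(\p,M)=0$ for $j>\id_{R_\p}M_\p$, and $\id_{R_\p}M_\p\le\dim R_\p=d$ when finite (it is at most $\operatorname{depth} R_\p=d$ by Bass/Ischebeck), we get $\mu^{d+c}(\p,M)=0$ whenever $c>0$. Part (1) of the previous theorem then gives $\mu^d(\p,H^c_{\fa}(M))=0$. Thus every nonzero Bass number of $H^c_{\fa}(M)$ at $\p$ sits in degree $\le \hei(\p)-1\le\dim R-1$, so $\id_R(H^c_{\fa}(M))<\dim R$.

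The dimension bound needs care. If $\dim R=\infty$, the inequality is read as finite versus infinite; otherwise it is the strict inequality just proved. For (2), with $c>1$ we have $d+c-1>d$, so $\mu^{d+c-1}(\p,M)=0$, and part (2) of the previous theorem gives $\mu^{d-1}(\p,H^c_{\fa}(M))=0$. For (3), with $c>2$ we have $d+c-2>d$. The hypothesis $\mu^{d}(\p,H^{c-1}_{\fa}(M))=0$ lets part (3) apply, giving $\mu^{d-2}(\p,H^c_{\fa}(M))=0$. The bounds follow as before.

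For non-finite generation in (1), suppose $H^c_{\fa}(M)$ were finitely generated. Pick $\p$ minimal in its (nonempty, since $c=\cd$) support. Then $H^c_{\fa}(M)_\p$ is a nonzero finite-length module over $R_\p$. For such a module over a regular (hence Cohen–Macaulay) local ring, $\mu^{d}(\p,-)\neq0$: by Roberts/Bass its injective dimension equals $\operatorname{depth}R_\p=d$. This contradicts $\mu^d(\p,H^c_{\fa}(M))=0$.

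The main point to check is the vanishing $\mu^{j}(\p,M)=0$ for $j>\hei(\p)$. This holds because $\Ext^j_{R_\p}(\kappa(\p),M_\p)=0$ for $j>\pd\kappa(\p)=d$, and it is what feeds the previous theorem.
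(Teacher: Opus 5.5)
Your proposal is correct and is essentially the paper's proof. Applying the preceding theorem at every $\p\in\Supp_R(H^c_{\fa}(M))$ with $d=\hei(\p)$, together with $\mu^{j}(\p,M)=0$ for $j>\hei(\p)$, is a uniform repackaging of the paper's case split on $\hei(\p)$ relative to $\dim R$; your non-finite-generation step is the same contradiction with $\mu^{\hei(\p)}(\p,H^c_{\fa}(M))=0$, citing Bass's theorem where the paper cites Matsumura.

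Drop your aside on $\dim R=\infty$. Nothing in your argument gives $\id_R(H^c_{\fa}(M))<\infty$ in that case, and it can fail: for $R=A[t]$ with $A$ Nagata's infinite-dimensional regular ring, $H^1_{tR}(R)$ has infinite injective dimension. So the statement must be read with $\dim R<\infty$, as the paper implicitly does.
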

Moreover, if $\grade_{\fa}(M)$ denotes the length of a maximal $M$-regular sequence contained in $\fa$, then we establish the following result concerning the Bass numbers  of $ H^{\grade_{\fa}(M)}_{\fa}(M)$. Note that, in view of \cite[Theorem 6. 2. 7]{bsh}, $\grade_{\fa}(M)$ is the first non-vanishing level of local cohomology modules of $M$ with respect to $\fa$. 

 The following theorem, also,  may be regarded as a generalization of  \cite[Theorem 2. 1]{bns}, to the non regular case, where the grade is grater than 1.
 \begin{thm}
 Let $\p\in \var(\fa)$ and   $g\leq \grade_{\fa}(M)$. Then the following statements hold.
\begin{enumerate}
    \item $\mu^0(\p, H^g_{\fa}(M))= \mu^g(\p, M)< \infty$.
    \item $\mu^1(\p, H^g_{\fa}(M))\leq \mu^{g+ 1}(\p, M)$ and if $\mu^0(\p, H^{g+ 1}_{\fa}(M))= 0$, then the equality holds.
    \item If $\mu^2(\p, H^g_{\fa}(M))= 0$, then $\mu^0(\p, H^{g+ 1}_{\fa}(M)) \leq \mu^{g+ 1}(\p, M)< \infty$.
    \item If $\mu^3(\p, H^g_{\fa}(M))= 0$, then $\mu^1(\p, H^{g+ 1}_{\fa}(M)) \leq \mu^{g+ 2}(\p, M)< \infty$.
\end{enumerate}    
 \end{thm}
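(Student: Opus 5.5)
Localizing at $\p$ reduces everything to a local statement. Bass numbers are local, $H^j_{\fa}(M)_{\p}\cong H^j_{\fa R_{\p}}(M_{\p})$, and $\grade_{\fa R_\p}(M_\p)\geq \grade_{\fa}(M)\geq g$. We may therefore assume $(R,\m)$ is local, $\p=\m$, $\fa\subseteq \m$, and $H^j_{\fa}(M)=0$ for all $j<g$. The main tool is the Grothendieck spectral sequence for the composite functor $\Hom_R(k,-)=\Hom_R(k,\Gamma_{\fa}(-))$, where $k=R/\m$. This is legitimate because $\Gamma_{\fa}$ sends injectives to injectives and $\Hom_R(k,\Gamma_{\fa}(X))=\Hom_R(k,X)$, as $k$ is $\fa$-torsion. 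It gives
\[E_2^{p,q}=\Ext^p_R(k,H^q_{\fa}(M))\Longrightarrow \Ext^{p+q}_R(k,M).\]
Since $E_2^{p,q}=0$ for $q<g$, the sequence is concentrated in rows $q\geq g$. Finiteness of $\mu^n(\m,M)=\dim_k\Ext^n_R(k,M)$ is automatic because $M$ is finitely generated. The four statements then come from reading off low-degree corner terms, exactly as in the five-term exact sequence, but shifted to the bottom row $q=g$.

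For (1) and (2), consider total degrees $g$ and $g+1$. In total degree $g$ the only term is $E_2^{0,g}$. No nonzero differential enters or leaves it: incoming differentials come from columns $p<0$, and outgoing ones $d_r:E_r^{0,g}\to E_r^{r,g-r+1}$ land in rows below $g$. Hence $\Ext^g(k,M)\cong \Hom(k,H^g_{\fa}(M))$, which is (1). In total degree $g+1$ the terms are $E_2^{1,g}$ and $E_2^{0,g+1}$. The term $E_2^{1,g}$ is never hit and never emits, so $E_\infty^{1,g}=E_2^{1,g}$. It is a subquotient of the abutment, namely the bottom filtration piece, which gives the injection $\Ext^1(k,H^g_{\fa}(M))\hookrightarrow \Ext^{g+1}(k,M)$ and hence the inequality in (2). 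If moreover $E_2^{0,g+1}=\Hom(k,H^{g+1}_{\fa}(M))=0$, then $\Ext^{g+1}(k,M)$ has the single graded piece $E_\infty^{1,g}$, and equality holds.

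For (3) and (4), look at the differentials out of the row $q=g+1$. The term $E_2^{0,g+1}$ supports only $d_2:E_2^{0,g+1}\to E_2^{2,g}$; higher differentials would land in rows below $g$. If $\mu^2(\m,H^g_{\fa}(M))=0$, this $d_2$ vanishes, so $E_\infty^{0,g+1}=E_2^{0,g+1}$. This is a quotient of $\Ext^{g+1}(k,M)$, which gives (3). Similarly, $E_2^{1,g+1}$ supports only $d_2:E_2^{1,g+1}\to E_2^{3,g}$ and receives nothing, since column $-1$ is zero. If $E_2^{3,g}=0$, then $E_\infty^{1,g+1}=E_2^{1,g+1}$, a subquotient of $\Ext^{g+2}(k,M)$, which yields (4).

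The only real obstacle is justifying the spectral sequence: one needs that $\Gamma_{\fa}$ preserves injectives over a Noetherian ring and that $\Hom(k,-)$ factors through $\Gamma_{\fa}$ as described. Once that is in place, the rest is careful bookkeeping of which differentials can be nonzero, using that the rows below $g$ vanish and that columns are nonnegative.
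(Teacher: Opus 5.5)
Your proposal is correct and follows the paper's proof essentially verbatim: localize at $\p$, use the Grothendieck spectral sequence $\Ext^p_R(k,H^q_{\fa}(M))\Rightarrow\Ext^{p+q}_R(k,M)$ with the rows $q<g$ vanishing, and read off the low-degree corner terms. In (3) you correctly identify $E_\infty^{0,g+1}$ as a quotient of $\Ext^{g+1}_R(k,M)$; the paper's text says $\Ext^{g+2}$ at that point, which is a slip. Your justification of $\Hom_R(k,\Gamma_{\fa}(-))\cong\Hom_R(k,-)$ via $\fa\subseteq\m$ is also a cleaner route to the functor isomorphism that the paper obtains through direct limits.
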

From the above theorem, we deduce that $H^{\grade_{\fa}(M)}_{\fa}(M)$ has ﬁnite Goldie dimension.

It is worth mentioning  that, in \cite{le}, the author computes the Bass numbers $\mu^0(\p, H^g_{\fa}(R))$ and $\mu^1(\p, H^g_{\fa}(R))$ in the case where  $R$ is a regular local ring containing a field, $\fa$ is  an ideal with $g= \grade_{\fa}(R)$, such that
 $\hei(\p)= g$ for every minimal prime $\p$ of $\fa$.
 
We keep the assumptions and notations introduced in the Introduction throughout the paper.
\section{Results}
 
The following version of the Grothendieck spectral sequences will be used in the paper.
\begin{lem} (\cite[Theorem 10. 47]{r}) \label{rot10.47}
      Let $\mathcal{A}\stackrel{\mathcal{F}}{ \rightarrow} \mathcal{B} \stackrel{\mathcal{G}}{ \rightarrow}\mathcal{C}$
  be covariant additive functors, where $\mathcal{A}$, $\mathcal{B}$, and $\mathcal{C}$ are abelian categories with enough injectives. Assume that $\mathcal{F}$ is left exact and that $\mathcal{G}(E)$ is right $\mathcal{F}$-acyclic for every
injective object $E$ in $\mathcal{A}$. Then, for every object $A$ in $\mathcal{A}$, there is a third quadrant spectral sequences with
\[E_2^{i, j}= R^i\mathcal{F}(R^j\mathcal{G}(A))\underset{i}{\Rightarrow}R^{i+ j}\mathcal{F}\mathcal{G} (A).\]
\end{lem}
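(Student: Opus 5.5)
The proof is the standard Cartan--Eilenberg double-complex argument. Read the composite as $\mathcal{F}\mathcal{G}$, with $\mathcal{G}$ applied first and $\mathcal{F}$ left exact, as the hypothesis on $\mathcal{G}(E)$ indicates. Fix an object $A$ and an injective resolution $0\to A\to I^\bullet$. Apply $\mathcal{G}$ to get a cochain complex $\mathcal{G}(I^\bullet)$ in the category on which $\mathcal{F}$ acts. Its cohomology is $H^j(\mathcal{G}(I^\bullet))=R^j\mathcal{G}(A)$ by definition of right derived functors. Each term $\mathcal{G}(I^p)$ is right $\mathcal{F}$-acyclic by hypothesis.

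First I would choose a Cartan--Eilenberg injective resolution $J^{\bullet,\bullet}$ of $\mathcal{G}(I^\bullet)$. This exists because the category has enough injectives. For each $p$, the column $J^{p,\bullet}$ injectively resolves $\mathcal{G}(I^p)$. Moreover, the induced complexes of coboundaries, cocycles and cohomology in each row direction are injective resolutions of $B^p$, $Z^p$ and $H^p(\mathcal{G}(I^\bullet))$, and the relevant short exact sequences split termwise. Then I form the first-quadrant double complex $\mathcal{F}(J^{\bullet,\bullet})$ with total complex $T$, and compare its two filtration spectral sequences.

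\emph{First filtration.} Take cohomology in the resolution direction first. For fixed $p$ this gives $R^q\mathcal{F}(\mathcal{G}(I^p))$. Acyclicity makes this vanish for $q>0$, and for $q=0$ it equals $\mathcal{F}\mathcal{G}(I^p)$. The $E_1$ page is therefore concentrated on one line, the $E_2$ page is $H^n(\mathcal{F}\mathcal{G}(I^\bullet))=R^n(\mathcal{F}\mathcal{G})(A)$ there, and the sequence degenerates. Hence $H^n(T)\cong R^n(\mathcal{F}\mathcal{G})(A)$.

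\emph{Second filtration.} Take cohomology in the $p$-direction first. Because the Cartan--Eilenberg short exact sequences split and additive functors preserve split exact sequences, $\mathcal{F}$ commutes with taking this cohomology. So ${}^{II}E_1^{j,q}=\mathcal{F}(H^j_{I}(J^{\bullet,q}))$, where $H^j_I(J^{\bullet,\bullet})$ is an injective resolution of $R^j\mathcal{G}(A)$. Taking cohomology in $q$ then gives ${}^{II}E_2^{i,j}=R^i\mathcal{F}(R^j\mathcal{G}(A))$. Since the double complex lies in a quadrant, the spectral sequence converges to $H^{i+j}(T)$, which gives the claimed $E_2^{i,j}\Rightarrow R^{i+j}\mathcal{F}\mathcal{G}(A)$.

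The main obstacle is the second filtration, specifically justifying that $\mathcal{F}$ commutes with cohomology in the horizontal direction. This is exactly why a Cartan--Eilenberg resolution is needed rather than an arbitrary injective resolution of the complex. I would verify it via the split sequences $0\to Z^{p,q}\to J^{p,q}\to B^{p+1,q}\to 0$ and $0\to B^{p,q}\to Z^{p,q}\to H^{p,q}\to 0$, in which all terms are injective. The remaining checks are routine: the existence of Cartan--Eilenberg resolutions, and independence of choices up to canonical isomorphism from $E_2$ onward.
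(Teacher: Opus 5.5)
Your proposal is correct and follows essentially the same route as the source: the paper gives no proof of its own but cites Rotman's Theorem 10.47, which is proved by exactly this Cartan--Eilenberg double-complex argument. There, one spectral sequence collapses by $\mathcal{F}$-acyclicity of the $\mathcal{G}(I^p)$, and in the other $\mathcal{F}$ commutes with horizontal cohomology because the Cartan--Eilenberg sequences split. Your reading of the composite as $\mathcal{F}\mathcal{G}$ with $\mathcal{G}\colon\mathcal{A}\to\mathcal{B}$ applied first correctly repairs the order of the arrows in the displayed statement, and your first-quadrant cohomological indexing is what Rotman calls a third-quadrant spectral sequence.
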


 The following convergence of spectral sequences plays a crucial role in the paper.
\begin{lem}\label{spec}
 Assume that  $(R, \m)$ is local and $N$ is an $R$-module. Then there exists the following   convergence of spectral sequences.
\begin{equation}\label{1}
     E_2^{i, j}= \Ext^i_{R }(\tfrac{R}{\m}, H^j_{\fa}(N))\underset{i}{\Rightarrow}\Ext^{i+ j}_{R}(\tfrac{R}{\m}, N).
\end{equation}
   
 \end{lem}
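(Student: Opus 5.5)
We obtain the spectral sequence as an instance of Lemma \ref{rot10.47}. We take $\mathcal{A}=\mathcal{B}=\mathcal{C}$ to be the category of $R$-modules, $\mathcal{G}=\Gamma_{\fa}(-)$ and $\mathcal{F}=\Hom_R(\tfrac{R}{\m},-)$. Both functors are additive, covariant and left exact, and the category of $R$-modules has enough injectives. Their right derived functors are $R^j\mathcal{G}=H^j_{\fa}(-)$ and $R^i\mathcal{F}=\Ext^i_R(\tfrac{R}{\m},-)$. Lemma \ref{rot10.47} then gives
\[E_2^{i,j}=\Ext^i_R(\tfrac{R}{\m},H^j_{\fa}(N))\underset{i}{\Rightarrow}R^{i+j}(\mathcal{F}\mathcal{G})(N),\]
once we check its acyclicity hypothesis and identify the composite $\mathcal{F}\mathcal{G}$ and its derived functors.

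For the hypothesis, let $E$ be an injective $R$-module. Since $R$ is Noetherian, $\Gamma_{\fa}(E)$ is again injective: $E$ is a direct sum of modules $E(R/\mathfrak{q})$, $\Gamma_{\fa}$ commutes with direct sums, and $\Gamma_{\fa}(E(R/\mathfrak{q}))$ is $E(R/\mathfrak{q})$ if $\mathfrak{q}\supseteq\fa$ and $0$ otherwise (see \cite{bsh}). Direct sums of injectives are injective over a Noetherian ring, so $\Gamma_{\fa}(E)$ is injective. Hence $\Ext^i_R(\tfrac{R}{\m},\Gamma_{\fa}(E))=0$ for all $i>0$, i.e.\ $\mathcal{G}(E)$ is right $\mathcal{F}$-acyclic.

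For the composite, note that every element of $\Hom_R(\tfrac{R}{\m},X)\cong(0:_X\m)$ is killed by $\m$. If $\fa=R$ then $\Gamma_{\fa}=0$ and the statement is degenerate; otherwise $\fa\subseteq\m$, so $(0:_X\m)\subseteq\Gamma_{\fa}(X)$. Therefore $\Hom_R(\tfrac{R}{\m},\Gamma_{\fa}(X))=\Hom_R(\tfrac{R}{\m},X)$ naturally in $X$, that is, $\mathcal{F}\mathcal{G}=\Hom_R(\tfrac{R}{\m},-)$. The derived functors of the composite are therefore $\Ext^{i+j}_R(\tfrac{R}{\m},N)$, which gives (\ref{1}).

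The only step requiring real input is the injectivity of $\Gamma_{\fa}(E)$. This is a standard fact that uses the Noetherian hypothesis through the structure theory of injective modules (Matlis), and we would cite it from \cite{bsh} rather than reprove it. Everything else is formal.
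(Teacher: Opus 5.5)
Your proposal is correct and follows the paper's proof. It uses the same Grothendieck spectral sequence with $\mathcal{G}=\Gamma_{\fa}$ and $\mathcal{F}=\Hom_R(\tfrac{R}{\m},-)$, and the same acyclicity argument via injectivity of $\Gamma_{\fa}(E)$; the only difference is that you identify $\Hom_R(\tfrac{R}{\m},\Gamma_{\fa}(-))\cong\Hom_R(\tfrac{R}{\m},-)$ directly (the socle is $\fa$-torsion), whereas the paper goes through $\varinjlim_t\Hom_R(\tfrac{R}{\fa^t},-)$.

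One small correction: for $\fa=R$ the statement is not merely degenerate but false. Taking $N=\tfrac{R}{\m}$ gives $E_2=0$ with nonzero abutment, so the hypothesis $\fa\subseteq\m$ is genuinely needed; the paper tacitly uses it too, in $\tfrac{R}{\m}\otimes_R\tfrac{R}{\fa^t}\cong\tfrac{R}{\m}$.
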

\begin{proof}
  
Let $E$ be an  injective $R$-module. Then, by \cite[Proposition  2. 1. 4]{bsh}, $\Gamma_{\fa}(E)$ is an  injective $R$-module, too.   
Also, we have the following  isomorphisms of   $R$-modules
\begin{align}\label{a}
\Hom_R(\tfrac{R}{\m}, \Gamma_{\fa}(N))&\cong \Hom_R(\tfrac{R}{\m},\underset{ \overset {\longrightarrow}{t\in \mathbb{N}}}{ \lim }\,\,   \Hom _{R}(\tfrac{R}{\fa^t}, N) && \text{(by \cite[Theorem 1. 3. 8]{bsh})} \\
&\cong \underset{ \overset {\longrightarrow}{t\in \mathbb{N}}}{ \lim }\,\,\Hom_R(\tfrac{R}{\m}, \Hom _{R}(\tfrac{R}{\fa^t}, N)) && \text{(by  (\cite[24. 10]{wis}) } \nonumber\\
&\cong \underset{ \overset {\longrightarrow}{t\in \mathbb{N}}}{ \lim }\,\, \Hom_R(\tfrac{R}{\m}\otimes_R \tfrac{R}{\fa^t}, N)  \nonumber\\
&=  \Hom_R(\tfrac{R}{\m}, N).\nonumber
\end{align}
The above isomorphisms are functorial, i.e.
\begin{equation}\label{wwww}
\Hom_R(\tfrac{R}{\m}, \Gamma_{\fa}(-))\cong  \Hom_{R}(\tfrac{R}{\m}, -): 
  \mathfrak{C} _R\rightarrow 
  \mathfrak{C}_R,
\end{equation}
where  $\mathfrak{C} _R$   denotes the category of all $R$-modules and   $R$-homomorphisms. Now, Lemma \ref{rot10.47} completes the proof.
 \end{proof}
In the following theorem, we consider Bass numbers $\mu^i(\p, H^g_{\fa}(M))$ where $g$ is close to the $\grade_{\fa}(M)$, the length of a maximal $M$-regular sequence contained in $\fa$. 

Recall that  if $X$ is an $R$-module and $\p\in \Spec(R)$, then for any $i\in \NN_0$, the $i$-th Bass number of $X$ with respect to $\p$ is defined by
\begin{equation}\label{Bass}
    \mu^i(\p, X):= \dim_{\kappa(p)}(\Ext^i_{R_{\p}}(\kappa(\p), X_{\p})).
\end{equation}
The really important property of the
$i$-th Bass number of $X$ with respect to $\p$  is that it gives, when the (uniquely determined) $i$-th term in the minimal injective resolution of $X$ is expressed as a direct sum of indecomposable injective $R$-modules, the number of those direct summands which
are isomorphic to $E_R(\tfrac{R}{\p})$, the injective hull of the $R$-module $\tfrac{R}{\p}$.

Here, $\var(\fa)$ is used to denote the set $ \{\p\in\Spec(R)\mid \fa\subseteq \p\}$.
 
\begin{thm}\label{grade}
Let $\p\in \var(\fa)$ and   $g\leq \grade_{\fa}(M)$. Then the following statements hold.
\begin{enumerate}
    \item $\mu^0(\p, H^g_{\fa}(M))= \mu^g(\p, M)< \infty$.
    \item $\mu^1(\p, H^g_{\fa}(M))\leq \mu^{g+ 1}(\p, M)$ and if $\mu^0(\p, H^{g+ 1}_{\fa}(M))= 0$, then the equality holds.
    \item If $\mu^2(\p, H^g_{\fa}(M))= 0$, then $\mu^0(\p, H^{g+ 1}_{\fa}(M)) \leq \mu^{g+ 1}(\p, M)< \infty$.
    \item If $\mu^3(\p, H^g_{\fa}(M))= 0$, then $\mu^1(\p, H^{g+ 1}_{\fa}(M)) \leq \mu^{g+ 2}(\p, M)< \infty$.
\end{enumerate}
    \end{thm}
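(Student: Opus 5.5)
Localizing at $\p$ reduces everything to the local case. Local cohomology commutes with localization, $\grade_{\fa R_\p}(M_\p)\geq \grade_{\fa}(M)\geq g$, and $\p R_\p\in\var(\fa R_\p)$. Bass numbers at $\p$ are the $\kappa(\p)$-dimensions of $\Ext^i_{R_\p}(\kappa(\p),-)$. So we may assume $(R,\m)$ is local with $\p=\m$ and $\grade_{\fa}(M)\geq g$. In this setting we apply the spectral sequence of Lemma \ref{spec} with $N=M$:
\[E_2^{i,j}=\Ext^i_R(R/\m,H^j_{\fa}(M))\Rightarrow \Ext^{i+j}_R(R/\m,M).\]

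By \cite[Theorem 6.2.7]{bsh}, $H^j_{\fa}(M)=0$ for $j<g$. Hence $E_2^{i,j}=0$ for $j<g$, and the first possibly nonzero row is $j=g$. The differentials are $d_r\colon E_r^{i,j}\to E_r^{i+r,j-r+1}$.

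\textbf{Part (1).} The term $E_2^{0,g}$ receives no nonzero differentials, since their sources would have negative $i$. Every differential leaving it lands in a row $j<g$, which is zero. So $E_2^{0,g}=E_\infty^{0,g}$. Every other term on the line $i+j=g$ lies in a row $j<g$ and vanishes. Therefore $\Ext^g(R/\m,M)\cong \Hom(R/\m,H^g_{\fa}(M))$, giving $\mu^0=\mu^g(\m,M)$. This number is finite because $M$ is finitely generated, so $\Ext^g_R(R/\m,M)$ is finitely generated.

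\textbf{Part (2).} On the line $i+j=g+1$ the possibly nonzero terms are $E^{1,g}$ and $E^{0,g+1}$. The term $E_2^{1,g}$ receives nothing, and its outgoing differentials land in rows below $g$, so $E_\infty^{1,g}=E_2^{1,g}$. This gives an exact sequence
\[0\to E_2^{1,g}\to \Ext^{g+1}_R(R/\m,M)\to E_\infty^{0,g+1}\to 0.\]
The inequality follows, and when $E_2^{0,g+1}=0$ the two sides are equal.

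\textbf{Part (3).} The only differential leaving $E^{0,g+1}$ that can be nonzero is $d_2\colon E_2^{0,g+1}\to E_2^{2,g}$. All later ones land in zero rows. If $E_2^{2,g}=0$, then $E_\infty^{0,g+1}=E_2^{0,g+1}$. It is then a quotient of $\Ext^{g+1}_R(R/\m,M)$ by the sequence in (2), which gives the bound.

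\textbf{Part (4).} On the line $i+j=g+2$, the term $E_\infty^{1,g+1}$ is a subquotient of $\Ext^{g+2}_R(R/\m,M)$. The only possibly nonzero differential out of $E^{1,g+1}$ is $d_2\colon E_2^{1,g+1}\to E_2^{3,g}$. Incoming differentials would start at $i<0$, so there are none. If $E_2^{3,g}=0$, then $E_2^{1,g+1}=E_\infty^{1,g+1}$, which is a subquotient of $\Ext^{g+2}_R(R/\m,M)$. Hence $\mu^1(\p,H^{g+1}_{\fa}(M))\leq\mu^{g+2}(\p,M)$.

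The step needing the most care is the bookkeeping: checking that no differential into or out of the relevant terms survives, and making sure the dimensions counted are $\kappa(\p)$-vector space dimensions. This holds because every term is annihilated by $\m$.
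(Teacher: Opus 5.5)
Your proposal is correct and follows essentially the same route as the paper: localize at $\p$, use the spectral sequence of Lemma \ref{spec}, use \cite[Theorem 6.2.7]{bsh} to make the rows $j<g$ vanish, and read off $E_2=E_\infty$ at the relevant positions together with the filtration on $\Ext^{n}_R(R/\m,M)$. Your exact sequence $0\to E_2^{1,g}\to \Ext^{g+1}_R(R/\m,M)\to E_\infty^{0,g+1}\to 0$ also makes part (3) cleaner, since it shows $E_\infty^{0,g+1}$ is a quotient of $\Ext^{g+1}_R(R/\m,M)$; the paper's text says $\Ext^{g+2}$ there, which is a slip.
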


\begin{proof}
First of all note that, $\grade_{\fa}(M)\leq \grade_{\fa R_{\p}}(M_{\p})$ and since $\p\in \var(\fa)$, by  the flat base change theorem (\cite[Theorem 4. 3. 2]{bsh}), after replacing $R$ with $R_{\p}$ we may assume that $(R, \m)$ is local and $\p= \m$. Now, consider the convergence of spectral sequences \ref{1},
\[ E_2^{i, j}= \Ext^i_{R }(\tfrac{R}{\m}, H^j_{\fa}(M))\underset{i}{\Rightarrow}\Ext^{i+ j}_{R}(\tfrac{R}{\m}, M).\]
 \begin{enumerate}
     \item Since $g\leq \grade_{\fa}(M)$, by \cite[Theorem 6. 2. 7]{bsh},
     \begin{equation}\label{zero}
       0= \Ext^i_{R }(\tfrac{R}{\m}, H^j_{\fa}(M))= E_2^{i, j}=  E_{\infty}^{i, j} \,\,\,\,\text{for all}\,\,i\in \NN_0\,\,\text{and all}\,\,j< g.
     \end{equation}
     Also, there exists a filtration 
 \[0\subseteq \Phi_{g} \subseteq\cdots\subseteq \Phi_{   1} \subseteq\Phi_{0}= \Ext^{g}_{R}(\tfrac{R}{\m}, M),\]
 of submodules of $\Ext^{g}_{R}(\tfrac{R}{\m}, M)$ such that 
 \begin{equation*}
     E_{\infty}^{i, j}\cong \tfrac{\Phi_{i}}{\Phi_{i+ 1}},\,\,\text{for all}\,\,i, j\in \NN_0\,\,\text{with}\,\,i+ j= g.
 \end{equation*}
 Now, \ref{zero} implies that 
 \[\Hom_{R }(\tfrac{R}{\m}, H^g_{\fa}(M))=  E_2^{0, g}= E_{\infty}^{0, g}\cong  \Ext^{g}_{R}(\tfrac{R}{\m}, M).\]
 \item Using \ref{zero} we have 
 \[\Ext^1_{R }(\tfrac{R}{\m}, H^g_{\fa}(M))= E_2^{1, g}=  E_{\infty}^{1, g}.\]
 Furthermore, there exists a filtration 
 \begin{equation}\label{f1}
     0\subseteq \Phi_{g+ 1} \subseteq\cdots\subseteq \Phi_{   2} \subseteq\Phi_{   1} \subseteq\Phi_{0}= \Ext^{g+ 1}_{R}(\tfrac{R}{\m}, M),
 \end{equation}
 of submodules of $\Ext^{g+ 1}_{R}(\tfrac{R}{\m}, M)$ such that 
 \begin{equation*}
     E_{\infty}^{i, j}\cong \tfrac{\Phi_{i}}{\Phi_{i+ 1}},\,\,\text{for all}\,\,i, j\in \NN_0\,\,\text{with}\,\,i+ j= g+ 1.
 \end{equation*}
 \ref{zero} implies that the filtration \ref{f1} is actually of the form
 \[ 0= \Phi_{g+ 1} =\cdots= \Phi_{   2} \subseteq\Phi_{   1} \subseteq\Phi_{0}= \Ext^{g+ 1}_{R}(\tfrac{R}{\m}, M).\]
 Therefore,
\begin{equation}\label{2}
    \Ext^1_{R }(\tfrac{R}{\m}, H^g_{\fa}(M))\cong \Phi_{   1},
\end{equation}
 is a submodule of $\Ext^{g+ 1}_{R}(\tfrac{R}{\m}, M)$. If $\mu^0(\p, H^{g+ 1}_{\fa}(M))= 0$ then 
 \[0= E_{\infty}^{0, g+ 1}= \tfrac{\Ext^{g+ 1}_{R}(\tfrac{R}{\m}, M)}{\Phi_{  1}}.\]
 Hence, using \ref{2} we have 
 \[ \Ext^1_{R }(\tfrac{R}{\m}, H^g_{\fa}(M))\cong \Ext^{g+ 1}_{R}(\tfrac{R}{\m}, M),\]
 and the result follows.
 
 \item Assume that $\mu^2(\p, H^g_{\fa}(M))= 0$. Then $E_2^{2, g}= 0$ and by \ref{zero} we deduce that 
 \[\Hom_{R }(\tfrac{R}{\m}, H^{g+ 1}_{\fa}(M))= E_2^{0, g+ 1}= E_{\infty}^{0, g+ 1}.\]
 Using the concept of convergence of spectral sequences, as used in parts (1) and (2),  the latter module is a subquotient of $\Ext^{g+ 2}_{R}(\tfrac{R}{\m}, M)$. This proves the claim.
 \item Let $\mu^3(\p, H^g_{\fa}(M))= 0$. Then $E_2^{3, g}= 0$ and by \ref{zero} we have 
 \[\Ext^1_{R }(\tfrac{R}{\m}, H^{g+ 1}_{\fa}(M))= E_2^{1, g+ 1}= E_{\infty}^{1, g+ 1}.\] 
 Therefore, using similar argument as in the above parts,  $\Ext^1_{R }(\tfrac{R}{\m}, H^{g+ 1}_{\fa}(M))$ is a subquotient of $\Ext^{g+ 2}_{R}(\tfrac{R}{\m}, M)$. In other words, $$\mu^1(\m, H^{g+ 1}_{\fa}(M)) \leq \mu^{g+ 2}(\m, M)< \infty.$$
 \end{enumerate}
\end{proof}
Recall that an $R$-module $X$ is said to have ﬁnite Goldie dimension
if $X$ does not contain an inﬁnite direct sum of non-zero submodules, or equivalently the
injective hull $E_R(X)$ of $X$ decompose as a ﬁnite direct sum of indecomposable (injective)
submodules.

 Note that by \cite[Theorem 2. 2]{lash}, $H^{\grade_{\fa}(M)}_{\fa}(M)$ has finitely many associated prime ideals. The above theorem now yields the following corollary.
\begin{cor}
    $H^{\grade_{\fa}(M)}_{\fa}(M)$ has ﬁnite Goldie dimension.
\end{cor}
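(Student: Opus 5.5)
Set $g:=\grade_{\fa}(M)$ and $X:=H^g_{\fa}(M)$. The plan is to prove that $E_R(X)$ is a finite direct sum of indecomposable injectives. For this I would use the structure theory of injective modules over a Noetherian ring. The injective hull of $X$ has the decomposition
\[E_R(X)\cong \bigoplus_{\p\in \Spec(R)} E_R(\tfrac{R}{\p})^{(\mu^0(\p, X))}.\]
In this decomposition, the primes that occur with nonzero multiplicity are exactly the associated primes of $X$. Hence it suffices to show two things: $\operatorname{Ass}_R(X)$ is finite, and $\mu^0(\p, X)<\infty$ for every $\p\in\operatorname{Ass}_R(X)$.

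For the first point, I would quote \cite[Theorem 2. 2]{lash}, as the paper indicates, which gives that $\operatorname{Ass}_R(H^{g}_{\fa}(M))$ is finite. If one prefers a self-contained argument, there is a direct route: choose a maximal $M$-regular sequence $x_1,\dots,x_g$ in $\fa$. Then $H^g_{\fa}(M)$ has $\Gamma_{\fa}(M/(x_1,\dots,x_g)M)$ as a submodule, and both modules have the same associated primes. The latter module is finitely generated, so its set of associated primes is finite. In the write-up I would simply cite \cite{lash}.

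For the second point, take any $\p\in\operatorname{Ass}_R(X)$. Since $X$ is $\fa$-torsion, $\p\supseteq\fa$, so $\p\in\var(\fa)$. Theorem \ref{grade}(1), applied with this $g$, then gives
\[\mu^0(\p, X)=\mu^g(\p,M)<\infty.\]
The finiteness of $\mu^g(\p,M)$ holds because $M$ is finitely generated: $\Ext^g_{R_\p}(\kappa(\p),M_\p)$ is then a finitely generated module over $R_\p$ that is killed by $\p R_\p$, hence a finite-dimensional $\kappa(\p)$-vector space.

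Combining the two points, $E_R(X)$ is a finite direct sum of indecomposable injectives, so $X$ has finite Goldie dimension. The only step that is not immediate is the finiteness of $\operatorname{Ass}_R(X)$, and that is handled by the cited result. Finally, if $g$ were infinite, i.e. $\fa M=M$, then $X=0$ and the statement is trivial; I would note this case separately.
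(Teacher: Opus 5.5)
Your proposal is correct and follows the paper's argument: finiteness of $\operatorname{Ass}_R(H^g_{\fa}(M))$ from \cite[Theorem 2. 2]{lash}, combined with $\mu^0(\p, H^g_{\fa}(M))=\mu^g(\p,M)<\infty$ from Theorem \ref{grade}(1) at each associated prime, which necessarily lies in $\var(\fa)$. Your optional self-contained route is also valid, since $\Gamma_{\fa}(M/(x_1,\dots,x_g)M)\cong(0:_{H^g_{\fa}(M)}(x_1,\dots,x_g))$ has the same associated primes as $H^g_{\fa}(M)$.
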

 
The  cohomological dimension
  of an $R$-module $X$ with respect   $\fa$ is defined by
\begin{equation}\label{defcd}
    \cd_{\fa}(X):= \sup\{i\in \NN_0\arrowvert H^i_{\fa}(X)\neq 0\}.
\end{equation}
Note that, in view of \cite[Corollary 3. 3. 3]{bsh},   $\cd_{\fa}(X)< \infty.$
The cohomological dimension, the last non-vanishing level of local cohomology, is a significant invariant in this theory and has attracted considerable attention; see, for example, \cite{dnt, l}  and \cite{o}.

 In the following theorem, we study the Bass numbers  $\mu^i(\p, H^j_{\fa}(M))$ in the case where $R$ is regular and $j\in \{\cd_{\fa}(M), \cd_{\fa}(M)- 1\}$.
\begin{thm}\label{reg}
    Assume that $R$ is regular, $\p\in \Supp_R(H^c_{\fa}(M))$ and set $d:= \hei(\p)$ and $c:= \cd_{\fa}(M)$. Then the following statements hold.
    \begin{enumerate}
        \item $\mu^d(\p, H^c_{\fa}(M))= \mu^{d+ c}(\p, M)< \infty$.\label{d,c}
        \item $\mu^{d- 1}(\p, H^c_{\fa}(M))\leq  \mu^{d+ c- 1}(\p, M)< \infty$.   The equality holds if $\mu^d(\p, H^{c- 1}_{\fa}(M))= 0$.\label{d-1,c}
        \item If $\mu^d(\p, H^{c- 1}_{\fa}(M))= 0$ then $\mu^{d- 2}(\p, H^c_{\fa}(M))\leq \mu^{d+ c- 2}(\p, M)< \infty$.\label{d,c-1}
           \end{enumerate}
\end{thm}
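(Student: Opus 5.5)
We plan to mimic the proof of Theorem \ref{grade}, now working at the opposite corner of the spectral sequence of Lemma \ref{spec}. First we localize at $\p$. By the flat base change theorem, $H^j_{\fa}(M)_{\p}\cong H^j_{\fa R_{\p}}(M_{\p})$, so $\cd_{\fa R_\p}(M_\p)\le c$. Since $\p\in\Supp_R(H^c_{\fa}(M))$, we in fact have equality $\cd_{\fa R_{\p}}(M_{\p})=c$. Bass numbers are also computed after localization. Hence we may assume that $(R,\m)$ is a regular local ring of dimension $d$, that $\p=\m$, and that $H^c_{\fa}(M)\neq 0$. Since $R$ is regular local, $\id_R(N)\le \operatorname{pd}_R(R/\m)=d$ for every $R$-module $N$: indeed $R/\m$ has a finite free resolution of length $d$ (the Koszul complex), so $\Ext^i_R(R/\m,N)=0$ for $i>d$. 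Moreover, $\Ext^i_R(R/\m,M)$ is finitely generated and killed by $\m$, so all $\mu^i(\m,M)$ are finite. This gives the finiteness claims once the inequalities are established.

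Next we consider the spectral sequence
\[E_2^{i,j}=\Ext^i_R(\tfrac{R}{\m},H^j_{\fa}(M))\underset{i}{\Rightarrow}\Ext^{i+j}_R(\tfrac{R}{\m},M).\]
It vanishes unless $0\le i\le d$ and $0\le j\le c$, so it lives in a rectangle. The key observation is that the differentials $d_r\colon E_r^{i,j}\to E_r^{i+r,j-r+1}$ go into and out of the corner $(d,c)$ trivially. Every differential leaving $E_r^{d,c}$ lands in column $d+r>d$, hence in zero. Every differential entering $E_r^{d,c}$ comes from $E_r^{d-r,c+r-1}$, which lies in row $c+r-1>c$ for $r\ge2$, hence is zero. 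Therefore $E_2^{d,c}=E_\infty^{d,c}$. On the total degree $n=d+c$ antidiagonal, every other term $E_\infty^{i,n-i}$ has either $i>d$ or $n-i>c$, so it vanishes. Thus $\Ext^{d+c}_R(R/\m,M)\cong E_\infty^{d,c}=\Ext^d_R(R/\m,H^c_{\fa}(M))$, which proves (1).

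For (2) we look at $E_2^{d-1,c}$. Outgoing differentials land in column $\ge d+1$, so they vanish. Incoming differentials come from row $c+r-1>c$, so they also vanish. Hence $E_2^{d-1,c}=E_\infty^{d-1,c}$. On the antidiagonal $n=d+c-1$, the only possibly nonzero terms are $E_\infty^{d-1,c}$ and $E_\infty^{d,c-1}$. In the filtration of $\Ext^{d+c-1}_R(R/\m,M)$, the piece $E_\infty^{d,c-1}$ is the bottom subobject $\Phi_d$, and $E_\infty^{d-1,c}\cong\Phi_{d-1}/\Phi_d$ is a quotient of $\Phi_{d-1}=\Ext^{d+c-1}_R(R/\m,M)$. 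This yields the inequality in (2). If $\mu^d(\m,H^{c-1}_{\fa}(M))=0$, then $E_2^{d,c-1}=0$, so $\Phi_d=0$ and we get equality.

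For (3) we consider $E_2^{d-2,c}$. Outgoing $d_2$ goes to $E_2^{d,c-1}$, which is zero by hypothesis. Higher outgoing differentials land in columns $>d$, and incoming differentials come from rows $>c$. Hence $E_2^{d-2,c}=E_\infty^{d-2,c}$, and this is a subquotient of $\Ext^{d+c-2}_R(R/\m,M)$, which gives the inequality. The main point requiring care is the bookkeeping of which filtration index corresponds to which column, so that the quotient-versus-sub statements come out right. Otherwise the argument is a routine corner analysis of the spectral sequence, using the bound $\id\le d$ that comes from regularity.
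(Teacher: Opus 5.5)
Your proposal is correct and follows essentially the same route as the paper. You localize at $\p$ via flat base change, using $\p\in\Supp_R(H^c_{\fa}(M))$ to keep $\cd$ equal to $c$. You then read off the terms $E_2^{d,c}$, $E_2^{d-1,c}$, $E_2^{d-2,c}$ of the spectral sequence of Lemma \ref{spec} from the vanishing for $i>d$ or $j>c$, with the same filtration bookkeeping ($E_\infty^{d,c-1}$ as the bottom piece $\Phi_d$, $E_\infty^{d-1,c}$ as the top quotient).

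Your explicit check of incoming and outgoing differentials just spells out what the paper leaves implicit. The claim $\id_R(N)\le d$ is true (regular local rings have global dimension $d$), but it is more than your Koszul argument proves; you only need $\Ext^i_R(R/\m,N)=0$ for $i>d$, which that argument does give.
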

\begin{proof}
Let $\p\in \Supp_R(H^c_{\fa}(M))$. Then, by the Flat Base Change Theorem (\cite[Theorem 4. 3. 2]{bsh}), $\cd_{\fa R_{\p}}(M_{\p})= \cd_{\fa}(M)= c$. Therefore, we may assume  that $(R, \m)$ is a regular local ring of dimension $d$ and $\p= \m$. 
    \begin{enumerate}
        \item Consider the following convergence of spectral sequences (\ref{1})
         \[E_2^{i, j}= \Ext^i_{R }(\tfrac{R}{\m}, H^j_{\fa}(M))\underset{i}{\Rightarrow}\Ext^{i+ j}_{R}(\tfrac{R}{\m}, M). \] 
      Since $R$ is regular of dimension $d$ and $c= \cd_{\fa}(M)$, we have
\begin{equation}\label{3}
  E_{\infty}^{i, j}= E_2^{i, j}= 0\,\,\text{ for all}\,\,i, j \in \NN_0\,\, \text{with}\,\,i> d\,\,\text{ or}\,\,j> c. 
\end{equation}
 Therefore, one can deduce that 
 \[\Ext^d_{R }(\tfrac{R}{\m}, H^c_{\fa}(M))= E_2^{d, c}= E_{\infty}^{d, c}\cong \Ext^{d+ c}_{R}(\tfrac{R}{\m}, M),\]
 this proves part (1).
 \item By \ref{3}, 
 \begin{equation}\label{7}
     \Ext^{d- 1}_{R }(\tfrac{R}{\m}, H^c_{\fa}(M))= E_{2}^{d- 1, c}= E_{\infty}^{d- 1, c}.
 \end{equation}
 Also, there exists a filtration 
 \begin{equation}\label{f2}
     0\subseteq \Phi_{d+ c- 1} \subseteq\cdots\subseteq \Phi_{   2} \subseteq\Phi_{   1} \subseteq\Phi_{0}= \Ext^{d+ c- 1}_{R}(\tfrac{R}{\m}, M),
 \end{equation}
 of submodules of $\Ext^{d+ c- 1}_{R}(\tfrac{R}{\m}, M)$ such that 
 \begin{equation*}
     E_{\infty}^{i, j}\cong \tfrac{\Phi_{i}}{\Phi_{i+ 1}},\,\,\text{for all}\,\,i, j\in \NN_0\,\,\text{with}\,\,i+ j= d+ c- 1.
 \end{equation*}
 Using \ref{3}, the above filtration is actually of the form
 \begin{equation} \label{6}
     0= \Phi_{d+ c- 1} =\cdots= \Phi_{   d+ 1} \subseteq\Phi_{  d} \subseteq\Phi_{d- 1} =\cdots= \Phi_{0}= \Ext^{d+ c- 1}_{R}(\tfrac{R}{\m}, M).
 \end{equation}
 Therefore, by \ref{7}, 
 \[\Ext^{d- 1}_{R }(\tfrac{R}{\m}, H^c_{\fa}(M))\cong \tfrac{\Ext^{d+ c- 1}_{R}(\tfrac{R}{\m}, M)}{\phi_{d }}\]
 is a subquotient of 
 $\Ext^{d+ c- 1}_{R}(\tfrac{R}{\m}, M),$ hence $\mu^{d- 1}(\m, H^c_{\fa}(M))\leq  \mu^{d+ c- 1}(\m, M)$.
 If $\mu^d(\p, H^{c- 1}_{\fa}(M))= 0$ then $E_{\infty}^{d, c- 1}= E_2^{d, c- 1}= 0$ and in the filtration 
\ref{6}, $0= \Phi_{   d+ 1}= \Phi_{  d}$. Hence, $\Ext^{d- 1}_{R }(\tfrac{R}{\m}, H^c_{\fa}(M))\cong \Ext^{d+ c- 1}_{R}(\tfrac{R}{\m}, M)$ and the result follows. 
 \item If $\mu^d(\p, H^{c- 1}_{\fa}(M))= 0$ then $E_{2}^{d, c- 1}= 0$. Therefore, 
 \[E_{2}^{d- 2, c }= E_{3}^{d- 2, c }= E_{\infty}^{d- 2, c }.\]
 Again,   using the concept of convergence of spectral sequences, $E_{\infty}^{d- 2, c}$ is a subquotient of $\Ext^{d+ c- 2}_{R}(\tfrac{R}{\m}, M)$ and this proves the claim.
 \end{enumerate}
 \end{proof}
As a consequence of the above theorem, we study in the following corollary the  injective dimension $\id_{R}(H^i_{\fa}(M))$ in the case where $R$ is regular. 

 \begin{cor}\label{inj}
Let $R$ be a regular ring and  $c:= \cd_{\fa}(M)$. Then the following statements hold.
 \begin{enumerate}
       \item If $c> 0$ then $\id_{R}(H^c_{\fa}(M))< \dim(R)$ and $H^c_{\fa}(M)$ is not finitely generated.
        \item If $c> 1$ then $\id_{R}(H^c_{\fa}(M))< \dim(R)- 1$.
        \item If $c> 2$ and   $\mu^{\hei(\p)}(\p, H^{c- 1}_{\fa}(M))= 0$ for any $\p\in \Spec(R)$, then $\id_{R}(H^c_{\fa}(M))< \dim(R)- 2$.
 \end{enumerate}
     
 \end{cor}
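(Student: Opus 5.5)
The plan is to read everything off Theorem \ref{reg} together with the elementary vanishing of Bass numbers over a regular ring. Recall that for an $R$-module $X$,
\[\id_R(X)=\sup\{i\in\NN_0 \mid \mu^i(\p,X)\neq 0 \text{ for some } \p\in\Spec(R)\}.\]
Also $\mu^i(\p,X)=0$ whenever $\p\notin\Supp_R(X)$. So it suffices to show, for every $\p\in\Supp_R(H^c_{\fa}(M))$ with $d:=\hei(\p)$, that $\mu^i(\p,H^c_{\fa}(M))=0$ for all $i\geq d$ in case (1), for all $i\geq d-1$ in case (2), and for all $i\geq d-2$ in case (3). Since $d\leq \dim(R)$, this gives the three bounds. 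If $\dim(R)=\infty$ there is nothing to prove for the bounds.

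The key input is that $R_\p$ is regular local of dimension $d$, hence has global dimension $d$. Therefore
\[\Ext^n_{R_\p}(\kappa(\p),Y)=0 \quad\text{for all } n>d \text{ and every } R_\p\text{-module } Y.\]
Applied to $Y=H^c_{\fa}(M)_\p$, this gives $\mu^i(\p,H^c_{\fa}(M))=0$ for $i>d$. Applied to $Y=M_\p$, it gives $\mu^{n}(\p,M)=0$ for $n>d$. Now use Theorem \ref{reg}:
\begin{enumerate}
\item If $c>0$, part (1) gives $\mu^d(\p,H^c_{\fa}(M))=\mu^{d+c}(\p,M)=0$, since $d+c>d$.
\item If $c>1$, part (2) gives $\mu^{d-1}(\p,H^c_{\fa}(M))\leq\mu^{d+c-1}(\p,M)=0$, since $d+c-1>d$.
\item If $c>2$ and the hypothesis $\mu^{\hei(\p)}(\p,H^{c-1}_{\fa}(M))=0$ holds, part (3) gives $\mu^{d-2}(\p,H^c_{\fa}(M))\leq \mu^{d+c-2}(\p,M)=0$.
\end{enumerate}
When $d-1$ or $d-2$ is negative, the corresponding claim is vacuous. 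This yields $\id_R(H^c_{\fa}(M))<\dim(R)$, $<\dim(R)-1$ and $<\dim(R)-2$, respectively.

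For non-finite generation in (1), I argue by contradiction. Since $c=\cd_{\fa}(M)$, we have $H^c_{\fa}(M)\neq 0$. Suppose $X:=H^c_{\fa}(M)$ is finitely generated, and pick a maximal ideal $\m\in\Supp_R(X)$; put $d=\hei(\m)$. Then $X_\m$ is a nonzero finitely generated module over the regular local ring $R_\m$, so it has finite injective dimension. By Bass's theorem, $\id_{R_\m}(X_\m)=\operatorname{depth}R_\m=d$, and the top Bass number at the maximal ideal is nonzero, i.e.\ $\mu^d(\m,X)\neq 0$. This contradicts $\mu^d(\m,X)=0$ obtained above.

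I expect the only delicate step to be this last one. One must invoke correctly the fact that a nonzero finitely generated module of finite injective dimension over a local ring $(S,\mathfrak{n})$ satisfies $\mu^{\operatorname{depth}S}(\mathfrak{n},-)\neq 0$. The rest is bookkeeping with the global dimension $d$ of $R_\p$.
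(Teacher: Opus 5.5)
Your proof is correct and is essentially the paper's: both read the vanishing off Theorem \ref{reg} using $\mu^n(\p,M)=0$ for $n>\hei(\p)$ (the global dimension of $R_{\p}$). Both also get non-finite generation from the nonvanishing of $\mu^{\hei(\p)}(\p,-)$ for a nonzero finitely generated module over the regular local ring $R_{\p}$ (you via Bass's theorem, the paper via Matsumura). The differences are bookkeeping — you fix $\p$ and kill all $\mu^i(\p,H^c_{\fa}(M))$ with $i\geq\hei(\p)-k$, while the paper fixes the index $\dim(R)-k$ and splits into cases by $\hei(\p)$ — plus your aside on $\dim(R)=\infty$, which should instead say that, as in the paper, $\dim(R)<\infty$ is tacitly assumed, since otherwise the bound would assert $\id_R(H^c_{\fa}(M))<\infty$, which is not automatic.
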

 \begin{proof}
 Since $R$ is regular, for any $\p\in \Spec(R)$, $  \hei(\p)= \pd_{R_{\p}}(\kappa(\p))$.
 \begin{enumerate}
     \item If $c> 0$  then for any $\p\in \Spec(R)$,  $\Ext^{\hei(\p)+ c}_{R_{\p}}(\kappa(\p), M_{\p})= 0$ and by \ref{reg}(\ref{d,c}) 
 \[\mu^{\dim(R)}(\p, H^c_{\fa}(M))= 0,\,\,\text{for all}\,\,\p\in \Spec(R),\]
 hence   $\id_{R}(H^c_{\fa}(M))< \dim(R)$. 
 
 Now assume, to the contrary, that $H^c_{\fa}(M)$ is finitely generated. Then, in view of \cite[Lemma 1, page 154]{mat}, 
 for any $\p\in \Supp_R(H^c_{\fa}(M))$,  $$\Ext_{R_{\p}}^{\hei(\p)}(\kappa(\p), H^c_{\fa}(M)_{\p})\neq 0,$$ this, in view of \ref{reg}(\ref{d,c}), implies that $\mu^{\hei(\p)+ c}(\p, M)\neq 0$, which is a contraction. 
 \item Let $c> 1$ and $\p\in \Spec(R)$. Then 
 \begin{equation}\label{4}
     \Ext^{\hei(\p)+ c- 1}_{R_{\p}}(\kappa(\p), M_{\p})= 0.
 \end{equation} 
  
      If $\hei(\p)= \dim(R)$, then using \ref{4} and \ref{reg}(\ref{d-1,c}), 
     \[\mu^{\dim(R)- 1}(\p, H^c_{\fa}(M))\leq \mu^{\dim(R)+ c- 1}(\p, M)= 0.\]
       If $\hei(\p)= \dim(R)- 1$, then   by \ref{4} and \ref{reg}(\ref{d,c}),
     \[\mu^{\dim(R)- 1}(\p, H^c_{\fa}(M))= \mu^{\dim(R)-1 +c}(\p, M)= 0.\]
       If $\hei(\p)< \dim(R)- 1$, then since $R$ is regular, $\mu^{\dim(R)- 1}(\p, H^c_{\fa}(M))= 0.$
       Therefore, $\mu^{\dim(R)- 1}(\p, H^c_{\fa}(M))= 0$ for all $\p\in \Spec(R)$.

 \item Now, let $c> 2$ and $\p\in \Spec(R)$. Then 
 \begin{equation}\label{5}
     \Ext^{\hei(\p)+ c- 2}_{R_{\p}}(\kappa(\p), M_{\p})= 0.
 \end{equation} 
 
       If $\hei(\p)= \dim(R)$,   then  by the assumption that  $\mu^{\hei(\p)}(\p, H^{c- 1}_{\fa}(M))= 0$ and   using \ref{5} and \ref{reg}(\ref{d,c-1}), 
     \[\mu^{\dim(R)- 2}(\p, H^c_{\fa}(M))\leq \mu^{\dim(R)+ c- 2}(\p, M)= 0.\]
       When $\hei(\p)= \dim(R)- 1$,     by \ref{5} and \ref{reg}(\ref{d-1,c}),
     \[\mu^{\dim(R)- 2}(\p, H^c_{\fa}(M))\leq \mu^{ \dim(R)+c- 2}(\p, M)= 0.\]
       If $\hei(\p)= \dim(R)- 2$, then by \ref{5} and \ref{reg}(\ref{d,c}),
    \[\mu^{\dim(R)- 2}(\p, H^c_{\fa}(M))=\mu^{ \dim(R)+c- 2}(\p, M)= 0.\] 
       When,  $\hei(\p)< \dim(R)- 2$,  since $R$ is regular, we have $\mu^{\dim(R)- 2}(\p, H^c_{\fa}(M))= 0.$
      Consequently, $\mu^{\dim(R)- 2}(\p, H^c_{\fa}(M))= 0$ for all $\p\in \Spec(R)$.
 Hence, $\id_{R}(H^c_{\fa}(M))< \dim(R)- 2.$
  \end{enumerate}
\end{proof}
  Huneke and   Sharp (\cite{hu}), and independently   Lyubeznik (\cite{lyu2}), showed that if 
if $(R, \m)$ is a regular local ring containing a field then \[\id_{R}(H^i_{\fa}(R))\leq \dim(H^i_{\fa}(R)).\]
Moreover, in \cite{he} Hellus proved that equality \[\id_{R}(H^i_{\fa}(R))= \dim(H^i_{\fa}(R))\] holds in certain cases when 
$R$ is a regular local ring containing a field.
 
 Note that, by the Flat Base Change Theorem (\cite[Theorem 4. 3. 2]{bsh}) together with  Grothendieck’s Vanishing Theorem (\cite[Theorem . 1. 2]{bsh}), one obtains \[\dim(H^i_{\fa}(M))\leq \dim(R)- i,\] whenever $R$  is a Cohen-Macaulay ring. 
 
Now, In view of the above discussion and the preceding corollary, it is natural to ask the following question.
\begin{qu}
     Let 
$R$ be a regular ring. When does the inequality \[\id_{R}(H^i_{\fa}(M))\leq   \dim(R)- i\] hold?
\end{qu}
Note that even in the non-regular case there are some   bounds for $\id_{R}(H^i_{\fa}(M))$ in terms of $\id_{R}(M)$. For example, it is shown in \cite{va}  that if $(R, \m)$ is local and $t\in \NN_0$ such that $H^i_{\fa}(M)=0$ for all $i\neq t$ then $\id_{R}(H^t_{\fa}(M))= \id_{R}(M)- t.$


\begin{thebibliography}{99}


\bibitem{al}
J. Ãlvarez Montaner, F. Sohrabi,  {\em Bass numbers of local cohomology of cover ideals of graphs}, J. Alg. Comb. {\bf53} (2021)  263–297, https://doi.org/10.1007/s10801-019-00928-0.

\bibitem{ba}
N. Abazari and K. Bahmanpour, {\em On the ﬁniteness of Bass numbers of local cohomology modules}, J.
Alg. Appl. 10 (2011)  783-791.

\bibitem{bns}
	K. Bahmanpour, R. Naghipour, M. Sedghi, {\em
On the finiteness of Bass numbers of local cohomology modules and cominimaxness},
Houston J. Math. 40 (2014), 319-337.

 \bibitem{lash}
M. P. Brodmann and A. Lashgari Faghani, {\em A finiteness result for associated primes of
local cohomology modules}, Proc.  
Amer. Math. Soc.,
{\bf128}:10 (2000) 2851–2853

\bibitem{bsh}
M.\ Brodmann,  R.Y.\ Sharp, {\em Local Cohomology: An Algebric
Introduction with Geometric Applications}, 2nd ed. Cambridge University Press, (2012).

\bibitem{bh}
W.\ Bruns and J.\ Herzog, {\em Cohen--Macaulay rings}, 2nd ed. Cambridge University Press, (1998).

 
\bibitem{dnt}
K. Divaani-Aazar, R. Naghipour and M. Tousi, {\em Cohomological dimension of
certain algebraic varieties}, Proc. Amer. Math. Soc.
{\bf 130}(12) (2002) 3537-3544.

\bibitem{ha}
R. Hartshorne, {\em Affine Duality and Coﬁniteness}, Inventiones Mathematicae, {\bf 9}:2
 (1970) 145–64,  https://doi.org/10.1007/BF01404554.

\bibitem{he}
 M. Hellus,  {\em A Note on the injective dimension
of local cohomology modules}, Proc. Amer. Math. Soc.,   \textbf{136}:7  (2008) 2313–2321.
\bibitem{hu1}
C. Huneke, {\em Problems on local cohomology}, Free resolutions in commutative algebra and algebraic
geometry , Res. Notes Math. 2 (1992), 93-108.

\bibitem{hu}
 C. Huneke and R. Y. Sharp, {\em Bass numbers of local cohomology modules,} Trans. Amer. Math. Soc. 339 (1993) 765-779.

 

\bibitem{le}
 A. J. Soto Levins, {\em Bass numbers of the first nonzero local cohomology
Module}, https://doi.org/10.48550/arXiv.2402.15667.


\bibitem{lyu2}
G. Lyubeznik, {\em Finiteness properties of local cohomology modules (an application of D-modules to
commutative algebra)}, Invent. Math., 113 (1993)  41-55.

\bibitem{l}
 G. Lyubeznik, {\em On the vanishing of local cohomology in characteristic $p> 0$}, Compositio Math., {\bf142} (2006) 207-221.

\bibitem{mat}
H. Matsumura, {\em Commutative ring theory}, Cambridge Studies in Advanced
Mathematics 8 (Cambridge University Press, Cambridge, 1986).

\bibitem{o}
A. Ogus, {\em Local cohomological dimensional of algebraic varieties}, Ann. Math., {\bf 98}:2
(1973) 327-365.

\bibitem{r}
J. J. Rotman, {\em An introduction to homological algebra}, 2nd ed. Academic press (2008).


 \bibitem{va}
 A. Vahidi, {\em Injective Dimension of
Local Cohomology Modules,} Bull. Korean Math. Soc., \textbf{54}: 4 (2017)  1331–1336.
https://doi.org/10.4134/BKMS.b160571.


\bibitem{wis} R. Wisbauer, {\em Founations of Module and Ring Theory}, Algebra, Logic and Applications, 3,
Amsterdam: Gordon and Breach (1991).

\bibitem{yan}
 K. Yanagawa, {\em Bass numbers of local cohomology modules with supports in monomial ideals}, Math. Proc.
Cambridge Philos. Soc., {\bf131} (2001) 45-60.
\end{thebibliography}
\end{document}